\newtheorem{theorem}{Theorem}[section] % 1st argument is your name for it
\newtheorem{lemma}[theorem]{Lemma}     % 2nd argument is what is printed
\newtheorem{definition}[theorem]{Definition}
\newtheorem{remark}[theorem]{Remark}
\newcommand{\N} {\mathbb N}
\newcommand{\Z} {\mathbb Z}
\begin{document}

\title[On shadowing and Stochastic Stability]% end with percent
 {On shadowing and Stochastic Stability}

\author[H. S. Puma]{Hector Suni Puma}
\address{Hector Suni Puma\\
Institute of Mathematics,  Department of Applied Mathematics\\
Universidade Estadual de Campinas\\
   13.083-859 Campinas - SP\\
   Brazil}
   \email{h212071@dac.unicamp.br}

\author[C.S. Rodrigues]{Christian S.~Rodrigues}
\address{Christian S.~Rodrigues\\
Institute of Mathematics,  Department of Applied Mathematics\\
Universidade Estadual de Campinas\\
   13.083-859 Campinas - SP\\
   Brazil and Max-Planck-Institute for Mathematics in the Sciences\\
  		Inselstr. 22\\
  		04103 Leipzig\\
  		Germany}
   \email{rodrigues@ime.unicamp.br}

\date{\today}

\begin{abstract}
  In this paper, we study stochastic stability of a dynamical system with shadowing property, which evolves under small random perturbation. We prove 
that time averages along the pseudo-trajectory converge with respect to stationary measure for the randomly perturbed dynamics.  In particular, we prove that stationary measures converge to physical measures when the noise level goes to zero if the dynamical system has a shadowing property.

\end{abstract}
%\commentgreen{Abstract rewritten} 

\keywords{Markov chain, random dynamics, shadowing, stochastic stability}
\subjclass{37C50(primary), 37C05 (secondary), 37H10, 37C40, 49K45}

\maketitle
%%%%%%%%%%%%%%%%%%%%%%%%%%%%%%%%%

\section{Introduction}

The study of statistical properties of Dynamical Systems is currently
present in almost all fields of science, from fundamental Mathematics
to applied modelling. In its essence, one wants to classify long term
behaviour of processes undergoing time transformation and understand how stable they are. This
task is particularly intricate in the case of randomly perturbed
dynamics, where several appearing phenomena have no counter part in
the deterministic case.

When the dynamical system evolves under small random 
perturbations, a way to characterise robust behaviour is by means of the so called \textit{stochastic stability}. We say that a system is stochastically stable if the stationary measure for the randomly perturbed dynamics is close, in some sense, to the invariant physical or Sinai-Ruelle-Bowen (SRB) measure, which in turn encodes important dynamical information. A more precise definition will be timely presented.

Since the 1980's, several results have been obtained under different
hypotheses leading to proving stochastic stability of some classes of
systems. To mention a few, Kifer has first proved stochastic stability
of diffeomorphisms having (uniformly) hyperbolic attractors and
expanding maps \cite{Kif86, Kif88}. Under slightly different technical
assumptions L.-S. Young then showed that the stability of (uniformly)
hyperbolic attractors of $C^2$- diffeomorphisms follows from the
persistence of hyperbolic structures \cite{You86}. Viana has shown how
to use spectral properties of transfer operators to infer stochastic
stability of several classes of maps \cite{Via97}. Later Ara\'{u}jo
has proved the existence and finiteness of physical measures for
randomly perturbed dynamics considering some special type of
perturbations \cite{Ara00, Ara01}. Then Alves and Ara\'{u}jo, based on
some construction already present in \cite{Ara00}, gave sufficient and
necessary conditions for stochastic stability of some non-uniformly
expanding maps \cite{AlA03}. More recently, Benedicks and Viana have
proved the stochastic stability of H\'{e}non like maps \cite{BeV06},
and Alves, Ara\'{u}jo, and V\'{a}squez the stochastic stability of a
type of non-uniformly hyperbolic diffeomorphisms with dominating
splitting \cite{AAV07}.

Another common way to characterise robustness of dynamics is through the idea of \textit{structural stability}. A dynamical system is said to be structurally stable if it is equivalent to some other dynamical system in its vicinity, choosing an appropriate topology. Among different consequences of structural stability, 
one may investigate the \textit{shadowing property}. Vaguely speaking, the shadowing property says that every pseudo-orbit, i.e. every sequence of points whose elements
are close enough to an orbit of a dynamical system, can be described
by a real orbit itself. Several authors have studied different types
of shadowing under different technical assumptions, mainly, because of
its close connection to structural stability \cite{Bow75, Kat80, Pil99, PiT10}. See \cite{Pil99} for a extensive historical account.

These two apparently distinct approaches to stability lead us to an important question: 
\textbf{how structural properties of dynamical systems are related to their statistical ones?}

The concept of shadowing is clearly related to stochastic stability because the orbits evolving under random perturbations are pseudo-orbits. In this context, proving stochastic stability equals to proving that most random orbits mimics the behaviour of real orbits (the unperturbed ones). The connection of shadowing with stochastic stability, however, has never been satisfactorily addressed. This problem has also been raised by Bonatti, D\'{i}az and Viana in~\cite[Problem D10]{BDV05}. To the best of our knowledge, the only attempt to study this connection was presented in the manuscript~\cite{Tod14}, where the author imposes several hypotheses and a much more restrictive context than ours.
 
In this paper, we study stochastic stability of a dynamical system with shadowing property, which evolves under small random perturbation. We define time averages with respect to continuous observables along the trajectory which converges to their averages with respect to an invariant measure. Using this framework we study time averages along the pseudo-trajectory that converge with respect to stationary measure for the randomly perturbed dynamics. Finally, we proof that stationary measures converge to physical measures when the noise level goes to zero if dynamical system has a shadowing property.

\section{Ergodicity and physical measures}\label{sec.ergo}

To get started, let $M$ be a compact smooth manifold of finite dimension with or without border. We endow $M$ with the distance $d$ induced by a Riemannian metric
and a normalised Riemannian volume form $m$, which we fix once and for all. When it is not stated otherwise, absolute continuity will be taken with respect to the probability $m$.

Given $\bold{x}=(x_j)_{j \in \Z_{0}^{+}}$, a sequence of points in $M$ from the non-negative integers. We define the following probability measure
\begin{equation}\label{eq.sum_seq}
 S_n(\bold{x})=\frac{1}{n+1}\sum_{j=0}^{n}\delta_{x_{j}},
\end{equation}
 where for every $y\in M$, the measure $\delta_{y}$ is the Dirac delta probability measure supported at $y$.
We shall consider the dynamics generated by the iteration of a continuous function $f:M \to M$, where we define $z_{j} = f^{j}(z_{0}) = \underbrace{f\circ \cdots \circ f}_\text{j times}(z_{0})$ for any initial condition $z_{0} \in M$. Hereafter we set $f^{0}(z) = z$, for every $z \in M$. To simplify our notation later on, we also define the following probability measure for each $z\in M$, which we denote
\begin{equation}\label{eq.cesar_orb}
S_n^f(z)=S_{n}\left(\{f^{j}(z)\}^{\infty}_{j=0}\right)=\frac{1}{n+1}\sum_{j=0}^{n}\delta_{f^{j}(z)}.
\end{equation}

As usual, we say that a probability measure $\mu$ is invariant by $f$ when $\mu\left(f^{-1}(A)\right) = \mu(A)$, for every Borel set $A \subset M$. Recall that, since $M$ endowed with the distance induced by the Riemannian metric is a compact metric space and $f$ is continuous, by the Krylov-Bogolyubov Theorem, there exists at least one $f$-invariant probability measure on $M$. So, if $\mu$ is such an $f$-invariant probability measure and $\varphi \in L^{1}(\mu)$ then, by the Birkhoff Ergodic Theorem, the limit
\begin{displaymath}
\tilde{\varphi} =  \lim_{n\to \infty}\frac{1}{n+1}\sum^n_{j=0}{\varphi\left(f^{j}(z)\right)}
\end{displaymath} 
exists for $\mu$-almost every point $z \in M$. Note that, on the one hand, for any $\varphi \in L^1(M)$, from (\ref{eq.cesar_orb}) we have
\begin{displaymath}
\begin{split}
\int{\varphi dS_n^f(z)}\\
&=\int\varphi d\left(\frac{1}{n+1}\sum_{j=0}^{n}\delta_{f^{j}(z)}\right)\\
&=\frac{1}{n+1}\sum_{j=0}^{n}\int\varphi d\delta_{f^{j}(z)}\\
&=\frac{1}{n+1}\sum_{j=0}^{n}\varphi \left(f^{j}(z)\right).
\end{split}
\end{displaymath}
Thus, there exists the limit
\begin{displaymath}
E_{z}(\varphi) \equiv \lim_{n\to \infty}\int{\varphi dS_n^f(z)}
\end{displaymath}
for almost every $z \in M$.
On the other hand, we also have that $\varphi \mapsto E(\varphi)=E_z(\varphi)$, for each $z\in B\subset M$, where $B$ is some set of positive measure, defines a non-negative linear functional on the space $L^1(M)$. Therefore, by the Riesz-Markov-Kakutani Representation Theorem, there exists a unique Radon measure $\nu$ on $M$, such that, 
\begin{equation}\label{eq.phis_measu}
\int{\varphi d\nu}=E(\varphi)=\lim_{n\to \infty}{\frac{1}{n+1}\sum_{j=0}^{n}\varphi \left(f^{j}(z)\right)}.
\end{equation}
Hence, a measure $\mu$ is ergodic if for $\mu$-almost every point $z\in M$, it holds that
$$S_n^f(z) \to \mu, \mbox{ as } n \to \infty$$
in the $weak^{\star}$-topology on the space of probability measures on $M$. These considerations lead us to the idea of physical measures.
\begin{definition}[Physical measure]
An invariant  probability measure $\mu$ on $M$ is called physical measure if equation (\ref{eq.phis_measu}) holds true for a set $\mathscr{B}(\mu) \subset M$, of positive Lebesgue measure, and for all continuous function $\varphi: M\to \mathbb{R}$. The set $\mathscr{B}(\mu) \subset M$ is called basin of the measure $\mu$.
\end{definition}

%Note that we have not assumed \textit{ad hoc} hypotheses for the existence of a unique ergodic physical measure, as it appears to be the case in~\cite{Tod14}.

\section{Shadowing property}

Using the notation of Section~\ref{sec.ergo}, we shall consider the dynamics generated by the iteration of a continuous function $f:M \to M$. For a trajectory $z_{j} = f^{j}(z_{0})$ we can consider sequences of points which are close enough to this trajectory, what is formalised in the following definition.
\begin{definition}[Pseudo-trajectory] Given $\delta >0$, we say that a sequence of points $\bold{x} =(x_j)_{j \in \Z_{0}^{+}}$ in $M$ is a $\delta$-pseudo-trajectory of $f$ if
\begin{equation}\label{eq.pseudo_orbit}
d\left(f(x_{j}),x_{j+1}\right)\leq \delta , \text{ for all } j\geq 0.
\end{equation}
\end{definition}
We can now define the shadowing property we will be considering in this paper.

\begin{definition}[Shadowing]
We say that a dynamical system generated by $f$ has the shadowing property if, for every $\varepsilon > 0$, we can find $\delta > 0$, such that, for each $\delta$-pseudo-trajectory $\bold{x} =(x_j)_{j \in \Z_{0}^{+}}$ of $f$, there exists a point $z\in  M$, such that,
$$d(f^{j}(z),x_{j})<\varepsilon, \text{ for all } j\geq 0.$$
In this case, we say that $\bold{x} =(x_j)_{j \in \Z_{0}^{+}}$ is $\varepsilon$-shadowed by some $z \in M$.
\end{definition}\
\begin{definition}[Shadowable points]
Following~\cite{Mo16}, we also say that a point $y\in M$ is shadowable if for every $\varepsilon >0$ there is $\delta >0$, such that, every $\delta$-pseudo orbit $\bold{x}=(x_j)_{j \in \Z_{0}^{+}}$, with $x_0=y$ can be $\varepsilon$-shadowed.\\
\end{definition}\

We finish this section with a lemma adapted from~\cite{Mo16, Re17}, proving that every point in $M$ is shadowable.
\begin{lemma}\label{lem.shwpts}
Let $f: M\to M$ be a continuous function of a compact metric space $M$. Then, $f$  has the shadowing property if, and only if, every point in $M$ is shadowable.
\end{lemma}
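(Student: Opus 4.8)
The plan is to prove both implications of \lemmaref{lem.shwpts}. The forward direction is immediate: if $f$ has the shadowing property, then for every $\eps>0$ there is a single $\delta>0$ working for \emph{all} $\delta$-pseudo-trajectories, so in particular for every pseudo-trajectory starting at a given point $y$; hence every $y\in M$ is shadowable. The content is in the converse, so I will spend the bulk of the argument there.

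For the converse, assume every point of $M$ is shadowable and fix $\eps>0$. For each $y\in M$ choose, by shadowability, a number $\delta_y>0$ such that every $\delta_y$-pseudo-trajectory with initial point $y$ is $(\eps/2)$-shadowed. The obstacle is that $\inf_y \delta_y$ could be $0$, so I cannot just take a uniform $\delta$ directly; the standard fix is a compactness argument combined with continuity of $f$. Concretely, for each $y$ pick $r_y>0$ with $r_y\le \delta_y/2$ and small enough that $d(f(y'),f(y))<\delta_y/2$ whenever $d(y',y)<r_y$ (uniform continuity of $f$ on the compact $M$). The balls $\{B(y,r_y)\}_{y\in M}$ cover $M$; extract a finite subcover $B(y_1,r_{y_1}),\dots,B(y_k,r_{y_k})$ and set $\delta=\min_{1\le i\le k}\{\,r_{y_i},\ \delta_{y_i}/2\,\}>0$.

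Now I claim this $\delta$ witnesses the shadowing property at level $\eps$. Let $\bold{x}=(x_j)_{j\ge 0}$ be any $\delta$-pseudo-trajectory. Its initial point $x_0$ lies in some $B(y_i,r_{y_i})$. I want to replace $\bold{x}$ by a pseudo-trajectory actually starting at $y_i$ so that the shadowability of $y_i$ applies: consider $\bold{x}'=(y_i, x_1, x_2, \dots)$. For $j\ge 1$ the defect $d(f(x_j),x_{j+1})\le\delta\le\delta_{y_i}$ is unchanged, and for $j=0$ we estimate $d(f(y_i),x_1)\le d(f(y_i),f(x_0))+d(f(x_0),x_1)<\delta_{y_i}/2+\delta\le\delta_{y_i}$, using $d(x_0,y_i)<r_{y_i}$ and the choice of $r_{y_i}$. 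Hence $\bold{x}'$ is a $\delta_{y_i}$-pseudo-trajectory starting at $y_i$, so by shadowability there is $z\in M$ with $d(f^j(z),x'_j)<\eps/2$ for all $j\ge 0$; in particular $d(f^j(z),x_j)<\eps/2<\eps$ for $j\ge 1$, and $d(z,x_0)\le d(z,y_i)+d(y_i,x_0)<\eps/2+r_{y_i}$. Shrinking the original $r_{y_i}$ at the outset so that additionally $r_{y_i}<\eps/2$ makes $d(z,x_0)<\eps$ as well, and then $z$ $\eps$-shadows $\bold{x}$. I should double-check that all the "shrink $r_y$ further" requirements ($r_y\le\delta_y/2$, the uniform-continuity bound, and $r_y<\eps/2$) are finitely many constraints that can be met simultaneously for each $y$ — they can — which completes the proof. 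The main thing to get right is the bookkeeping in the cover/Lebesgue-number step, since that is where the uniformity of $\delta$ is extracted from pointwise data.
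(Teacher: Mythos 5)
Your proof is correct, and it follows the same basic strategy as the paper's: reduce the pointwise data $\set{\delta_w}_{w\in M}$ to a uniform $\delta$ by extracting a finite subcover of the compact space $M$ and taking a minimum. The difference is that your version is complete where the paper's is not. The paper covers $M$ by the balls $B_{\delta_w}(w)$, sets $\delta=\min_i\delta_{w_i}$, and then simply asserts that an arbitrary $\delta$-pseudo-orbit is $\eps$-shadowed ``for some $i$''; but shadowability of $w_i$ only applies to pseudo-orbits whose initial point is \emph{exactly} $w_i$, and a $\delta$-pseudo-orbit starting at some $y_0\in B_{\delta_{w_i}}(w_i)$ need not be one, so the paper's argument as written has a gap at precisely this point. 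Your step of replacing $(x_0,x_1,\dots)$ by $(y_i,x_1,\dots)$, together with the estimate $d(f(y_i),x_1)\le d(f(y_i),f(x_0))+d(f(x_0),x_1)<\delta_{y_i}$ guaranteed by the a priori shrinking of the radii ($r_y\le\delta_y/2$, the continuity modulus, and $r_y<\eps/2$), is exactly what is needed to close that gap; it also explains why one must shadow at level $\eps/2$ rather than $\eps$, so as to recover $d(z,x_0)<\eps$ at time $j=0$ via $d(z,x_0)\le d(z,y_i)+d(y_i,x_0)$. One minor remark: since the radii $r_y$ are chosen pointwise anyway, plain continuity of $f$ at each $y$ suffices and uniform continuity is not needed. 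This is essentially the argument of \cite{Mo16}, and your bookkeeping checks out.
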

\begin{proof}
Since $f$ has the shadowing property, for every $\varepsilon>0$, we can find $\delta >0$ such that, for every $\delta$-pseudo orbit  $\bold{x}=(x_j)_{j \in \Z_{0}^{+}}$ there exists a point  $z\in M$ with
\begin{displaymath}
d(f^j(z), x_j)\leq \varepsilon, \text{ for all } j\geq 0.
\end{displaymath}
 \\

Conversely, if every point in $M$ is shadowable, then for any $\varepsilon >0$, we have that for every point $w\in M$ there exists $\delta_{w} >0$, such that, every $\delta_{w}$-pseudo orbit, $\bold{x}=(x_j)_{j \in \Z_{0}^{+}}$ is $\varepsilon$-shadowed by some point in $M$. The set of open balls $\{B_{\delta_{w}}(w)\}_{w\in M}$ centred at each $w$ defines an open cover for $M$, and since $M$ is a compact metric space, we can extract a finite subcover $\{B_{\delta_{w_{i}}}(w_{i})\}_{i=1}^{n}$. Let $\delta=\displaystyle{\min_{i}}\{\delta_{w_i}\}$, since $\delta \leq \delta_{w_{i}}$ for all $i$, then  we consider any $\delta$-pseudo orbit $\bold{y}=(y_j)_{j \in \Z_{0}^{+}}$, of $f$ for some $i$. Thus $\bold{y}=(y_j)_{j \in \Z_{0}^{+}}$ is $\varepsilon$- shadowed by some point in $M$.\\
\end{proof}

\section{Random perturbations}

  Given any small enough $\varepsilon >0$, as before, we consider the dynamics generated by the iteration of a continuous function $f: M\to M$. A Markov chain is defined by a family $\{P_{\varepsilon}( \cdot | x): x\in M, \varepsilon > 0\}$ of Borel probability measures, such that, for each $x \in M$, we have $P_{\varepsilon}( \cdot | x)$ absolutely continuous with respect to the Lebesgue measure and $supp(P_{\varepsilon}(\cdot | x))\subset B_{\varepsilon}(f(x))$, where $B_{\varepsilon}(f(x))$ is the open ball of radius $\varepsilon$ centred in $f(x)$. Henceforth, the orbit of $f$ subject to small perturbation will be called random orbits. It is given by the sequence of random variables, $(x_j)_{j}$ for all $j=0, 1, 2,\dots $,  where each $x_{j+1}$ has probability distribution $P_{\varepsilon}( \cdot | x_j)$. This family of Markov chains will be called small random perturbation of $f$.  Under quite general assumptions, it is shown to exist a family of random maps representing the Markov chain~\cite{JKR15, JMPR19}.

Similarly to the importance of invariant measure for an application, the stationary measures for the Markov chain encodes important ergodic properties of the randomly perturbed dynamics. 
 \begin{definition}
A Borel probability measure $\mu_{\varepsilon}$ on $M$ is a stationary measure for the Markov chain if  
\begin{displaymath}
\mu_{\varepsilon}(G)=\int{P_{\varepsilon}(G | x)d\mu_{\varepsilon}}
\end{displaymath}
 for every $x\in M$ and  every Borel set $G\subset M.$ 
\end{definition}

We can define a shift map $F:M\times M^{\mathbb{N}}\to M\times M^{\mathbb{N}}$, $(x_{0}, \bold{x})\mapsto (x_1,\sigma(\bold{x}))$, where $\sigma$ is the left shift on the space of random orbit, such that, $\sigma(\bold{x})=(x_1, x_2, . . .)$ for any $\bold{x}=(x_{0}, x_{1}, x_{2}, \dots)$ and $x_{0}$. The stationary measure defined above is equivalent to the skew-product measure $\mu_{\varepsilon}\times {P_{\varepsilon}}^{\mathbb{N}}$, given by 
\begin{displaymath}
d(\mu_{\varepsilon}\times {P_{\varepsilon}}^{\mathbb{N}})(x_0, x_1, ..., x_j,...)=\mu_{\varepsilon}(x_0)P_{\varepsilon}(dx_1|x_0)...P_{\varepsilon}(dx_j|x_{j-1}) \dots,
\end{displaymath}
to be invariant under the shift map. Since $M$ is a compact metric space, by Tychonoff Theorem, the space $M\times M^{\mathbb{N}}$ is also compact. Moreover, F is a continuous map, then by the Krylov-Bogolyubov Theorem, there exists at least one $F$-invariant probability measure on $M\times M^{\mathbb{N}}$. So, if $\mu_{\varepsilon}\times {P_{\varepsilon}}^{\mathbb{N}}$ is such an $F$-invariant probability measure and $\psi$ is an integrable function, then, by the Birkhoff Ergodic Theorem, 
\begin{displaymath}
\widetilde{\psi}(\bold{x})=\lim_{n\to\infty}\frac{1}{n+1}\sum_{j=0}^{n}\psi(x_{j})=\lim_{n\to\infty}\frac{1}{n+1}\sum_{j=0}^{n}(\psi\circ \pi_1)(F^j(\bold{x}))
\end{displaymath}
exists for $\mu_{\varepsilon}\times {P_{\varepsilon}}^{\mathbb{N}}$-a.e $(x_0,\bold{x})\in M\times M^{\mathbb{N}}$, where $\pi_1:M\times M^{\mathbb{N}}\to M$ is the projection on the first factor. Thus, there exists the limit
$$E_{x_0}(\psi):=\lim_{n\to\infty}\int{\psi dS_n(\bold{x})}$$
for $\mu_{\varepsilon}$-almost every $x_0\in M$. Moreover, we have that $\psi \mapsto E(\psi)$, for any $x_0\in B\subset M$, where $B$ is some set of positive measure, defines a non-negative linear functional on the space $L^1(M)$. Thus, by the Riesz-Markov Kakutani Representation Theorem, there exists a unique Radon measure 
$\nu_{\varepsilon}$ on $M$, such that,
\begin{displaymath}
\int{\psi d\nu_{\varepsilon}}=E(\psi)=\lim_{n\to\infty}\frac{1}{n+1}\sum_{j=0}^{n}\psi(x_{j})
\end{displaymath}
Hence, the measure $\mu_{\varepsilon}$ is ergodic if for $\mu_{\varepsilon}$-almost every point $x_{0} \in M$, it holds that 
\begin{equation}\label{eq.converge_random}
S_n(\bold{x}) \to \mu_{\varepsilon}, \text{ as } n \to \infty
\end{equation}
in the $weak^{\star}$-topology sense in the space of probability measures on $M$.

Based on these, we can define the stability against random perturbation as following.
 \begin{definition} The system $(f, \mu)$ is stochastically stable under the small perturbation scheme $\{P_{\varepsilon}(\cdot | x ): x\in M, \varepsilon >0\}$ if $\mu_{\varepsilon}$ converges, in the weak$^{\star}$ sense, to the physical measure $\mu$, when the noise level $\varepsilon$ goes to zero.
 \end{definition}
 
 We shall consider the space of continuous functions from $M$ to $\mathbb{R}$, denoted by $C(M)$, which we endow with the $sup$-norm. Furthermore, let $\mathcal{B}\subset C(M)$ be any set of bounded continuous functions on $M$. These functions will be called \textit{observables} from $\mathcal{B}$. Then, we can prove the following easy lemmata for later use.

\begin{lemma}\label{Lm1} The measure $\mu$ is ergodic if for $\mu$-a.e point $z \in M$, it holds that  $S_n^f(z)\to \mu$, as $n \to \infty$. Then, for every $\varphi \in \mathcal{B}$, and for each $\varepsilon >0$ sufficiently small, there exists $n_0\in \mathbb{N}$, such that, 
\begin{displaymath}
\left|\int{\varphi dS_n^f(z)}-\int{\varphi d\mu}\right|\leq \varepsilon, \text{ for all } n\geq n_{0}.
\end{displaymath}
\end{lemma}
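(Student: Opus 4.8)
The plan is to observe that this is essentially a reformulation of the definition of weak$^\star$ convergence combined with the definition of convergence of a sequence of real numbers, so the proof should be short.

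First I would fix an observable $\varphi \in \mathcal{B} \subset C(M)$ and a point $z \in M$ lying in the full $\mu$-measure set on which $S_n^f(z) \to \mu$ as $n \to \infty$ in the weak$^\star$ topology, which exists precisely because $\mu$ is ergodic in the sense stated. Recalling that weak$^\star$ convergence of probability measures on $M$ means exactly that $\int g\, dS_n^f(z) \to \int g\, d\mu$ for every $g \in C(M)$, and that $\varphi$ is in particular a bounded continuous function, we obtain that the sequence of real numbers $a_n := \int \varphi\, dS_n^f(z)$ converges to $a := \int \varphi\, d\mu$.

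Then I would simply invoke the definition of the limit of a real sequence: given $\varepsilon > 0$, there exists $n_0 = n_0(z,\varphi,\varepsilon) \in \mathbb{N}$ such that $|a_n - a| \leq \varepsilon$ for all $n \geq n_0$, which is precisely the asserted inequality $\left| \int \varphi\, dS_n^f(z) - \int \varphi\, d\mu \right| \leq \varepsilon$ for all $n \geq n_0$.

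There is no genuine obstacle here; the points worth emphasising are merely that the threshold $n_0$ depends on the point $z$ and on the observable $\varphi$ (as well as on $\varepsilon$), and that the hypothesis ``$\varepsilon$ sufficiently small'' is not actually used — the conclusion holds for every $\varepsilon > 0$. Should one later require a single $n_0$ valid uniformly over a whole family of observables simultaneously, that would demand an additional equicontinuity/compactness input (for instance an Arzel\`a--Ascoli argument on a relatively compact $\mathcal{B}$), but for the statement as written the pointwise argument above is sufficient.
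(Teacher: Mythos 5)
Your proposal is correct and follows essentially the same route as the paper: the paper's proof introduces the sub-basic weak$^\star$ neighbourhood $N_{\varepsilon,\varphi}(\mu)=\{\nu: |\int\varphi\,d\nu-\int\varphi\,d\mu|<\varepsilon\}$ and notes that $S_n^f(z)$ eventually lies in it, which is exactly your observation that the real sequence $\int\varphi\,dS_n^f(z)$ converges to $\int\varphi\,d\mu$. Your added remarks on the dependence of $n_0$ on $z$, $\varphi$, $\varepsilon$ and on the redundancy of the ``sufficiently small'' hypothesis are accurate but not needed.
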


 \begin{proof}
 Suppose that $S_n^f(z)\to \mu$ in $M$ and  take $\varphi \in \mathcal{B}$. Given $\varepsilon>0$, consider the neighbourhood 
\begin{displaymath}
N_{\varepsilon, \varphi}(\mu)=\left\{\nu: \left|\int{\varphi d\nu}-\int{\varphi d\mu}\right|< \varepsilon\right\}.
\end{displaymath} 
By hypothesis, there exists $n_0\in \mathbb{N}$, such that, for all $ n\geq n_0$, we have $S_n^f(z)\in N_{\varepsilon, \varphi}(\mu)$. That is, $ \displaystyle{ \left| \int{\varphi dS_n^f(z)}-\int{\varphi d\mu}\right|< \varepsilon}$, for all $n\geq n_0$. 
 \end{proof}
 
\begin{lemma}\label{Lm2} The measure $\mu_{\varepsilon}$ is ergodic, if for $\mu_{\varepsilon}$-a.e point $x_{0} \in M$, it holds that 
 $S_n(\bold{x})\to \mu_{\varepsilon}$, as $n\to \infty$. Then, for every $\varphi \in \mathcal{B}$, and for each $\delta(\varepsilon)>0$, there exists $n_0\in \mathbb{N}$, such that, 
 $$\left|\int{\varphi dS_n(\bold{x})}-\int{\varphi d\mu_{\varepsilon}}\right|\leq \delta(\varepsilon), \forall n\geq n_0$$
 \end{lemma}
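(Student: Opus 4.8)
The plan is to mimic the proof of \lemmaref{Lm1} almost verbatim, since the two statements differ only in which dynamical system and which limiting measure are involved: here the role of $S_n^f(z)$ is played by $S_n(\bold{x})$, the role of the invariant measure $\mu$ is played by the stationary measure $\mu_\varepsilon$, and the fixed tolerance $\varepsilon$ is renamed $\delta(\varepsilon)$ (a tolerance depending on the noise level). The hypothesis we are allowed to use is exactly the convergence $S_n(\bold{x})\to\mu_\varepsilon$ in the $weak^\star$-topology for $\mu_\varepsilon$-a.e.\ $x_0$, which is the ergodicity statement \rf{eq.converge_random} established just before these lemmata.

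First I would fix an observable $\varphi\in\mathcal{B}$ and a tolerance $\delta(\varepsilon)>0$, and introduce the weak$^\star$ neighbourhood
\begin{displaymath}
N_{\delta(\varepsilon),\varphi}(\mu_\varepsilon)=\left\{\nu:\left|\int{\varphi\,d\nu}-\int{\varphi\,d\mu_\varepsilon}\right|<\delta(\varepsilon)\right\},
\end{displaymath}
which is an open neighbourhood of $\mu_\varepsilon$ in the space of probability measures on $M$ precisely because $\varphi$ is continuous and bounded. Then I would invoke the convergence $S_n(\bold{x})\to\mu_\varepsilon$ to conclude that there exists $n_0\in\mathbb{N}$ such that $S_n(\bold{x})\in N_{\delta(\varepsilon),\varphi}(\mu_\varepsilon)$ for every $n\geq n_0$, which is exactly the asserted inequality $\left|\int{\varphi\,dS_n(\bold{x})}-\int{\varphi\,d\mu_\varepsilon}\right|\leq\delta(\varepsilon)$. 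The fact that $\int{\varphi\,dS_n(\bold{x})}=\frac{1}{n+1}\sum_{j=0}^{n}\varphi(x_j)$ is the linearity-of-the-integral computation already carried out in \secref{sec.ergo}, so no new work is needed there.

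There is really no substantive obstacle: the only point that deserves a word is that $n_0$ depends on $\varphi$, on $\delta(\varepsilon)$, and a priori on the chosen base point $x_0$ (equivalently on the realisation $\bold{x}$), but since $\varphi\in\mathcal{B}\subset C(M)$ the neighbourhood is genuinely open and the definition of weak$^\star$ convergence applies directly. If one wanted the bound uniformly over a family of observables one would need equicontinuity or a countable dense subset argument, but the statement as phrased is pointwise in $\varphi$, so the one-observable argument suffices. I would close by noting that the estimate holds for $\mu_\varepsilon$-a.e.\ $x_0$, inherited from the a.e.\ validity of \rf{eq.converge_random}.
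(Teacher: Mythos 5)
Your proposal is correct and follows essentially the same route as the paper's own proof: fix $\varphi\in\mathcal{B}$ and $\delta(\varepsilon)>0$, form the weak$^{\star}$ neighbourhood $N_{\delta(\varepsilon),\varphi}(\mu_{\varepsilon})$, and read off $n_0$ from the assumed convergence $S_n(\bold{x})\to\mu_{\varepsilon}$. Your added remarks on the dependence of $n_0$ on $\varphi$ and on the realisation $\bold{x}$ are accurate but do not change the argument.
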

 \begin{proof}
 Suppose that $S_n(\bold{x})\to \mu_{\varepsilon}$ in $M$ and take $\varphi \in \mathcal{B}$. Given $\delta(\varepsilon)>0$, consider the neighborhood $\displaystyle{N_{\varepsilon, \varphi}(\mu_{\varepsilon})=\left\{\nu_{\varepsilon}: \left|\int{\varphi d\nu_{\varepsilon}}-\int{\varphi d\mu_{\varepsilon}}\right|< \delta(\varepsilon)\right\}}$. By hypothesis, there exists $n_0\in \mathbb{N}$, such that, for all $ n\geq n_0$ we have $S_n(\bold{x})\in N_{\varepsilon, \varphi}(\mu)$, that is $\displaystyle{\left|\int{\varphi dS_n(\bold{x})}-\int{\varphi d\mu_{\varepsilon}}\right|< \delta(\varepsilon)}$, for all $n\geq n_0$. 
 \end{proof}
%\end{enumerate}
%\end{remark}

\section{Shadowing and stochastic stability}
We can now state and prove the main result of this manuscript.

\begin{theorem}\label{thm.stochastic} 
Consider the dynamics generated by the map $f : M \to M$, and suppose that it has a unique absolutely continuous, invariant, ergodic probability $\mu$ and the shadowing property. Furthermore, suppose that under the perturbation scheme $\{P_{\varepsilon}(\cdot| x ): x\in M,\varepsilon >0\}$, with $P_{\varepsilon}(\cdot| x )$ absolutely continuous and $supp(P_{\varepsilon}(\cdot|x)) \subset B_{\varepsilon}(f(x))$ for every $x\in M$, there is a unique stationary probability measure $\mu_{\varepsilon}$ for every small $\varepsilon$. Then, the dynamics is stochastically stable.
\end{theorem}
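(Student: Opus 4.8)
The plan is to pass through three stages: first relate the random orbit to a deterministic pseudo-trajectory of $f$, then use the shadowing property to compare time averages of observables along the random orbit with time averages along a genuine $f$-orbit, and finally take the noise level $\eps \to 0$ and combine the ergodicity hypotheses. Fix an observable $\vp \in \cB \subset C(M)$ and let $\eps_0 > 0$ be arbitrary. By uniform continuity of $\vp$ on the compact space $M$, there is $\eta > 0$ so that $d(p,q) < \eta$ implies $|\vp(p) - \vp(q)| < \eps_0$. Apply the shadowing property of $f$ to $\eta$ (in the role of $\eps$) to obtain $\delta > 0$ with the property that every $\delta$-pseudo-trajectory is $\eta$-shadowed by a genuine orbit. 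Now restrict attention to perturbations with noise level $\eps \le \delta$: since $\mathrm{supp}(P_\eps(\cdot|x)) \subset B_\eps(f(x))$, \emph{every} realisation of the random orbit $\bold{x} = (x_j)_{j \ge 0}$ satisfies $d(f(x_j), x_{j+1}) \le \eps \le \delta$, hence is (surely, not merely almost surely) a $\delta$-pseudo-trajectory of $f$. By shadowing there is $z = z(\bold{x}) \in M$ with $d(f^j(z), x_j) < \eta$ for all $j \ge 0$, so $|\vp(x_j) - \vp(f^j(z))| < \eps_0$ for all $j$, and averaging over $j = 0, \dots, n$ gives
\begin{equation}\label{eq.proof_shadow_bound}
\left| \int \vp \, dS_n(\bold{x}) - \int \vp \, dS_n^f(z) \right| \le \frac{1}{n+1} \sum_{j=0}^{n} \left| \vp(x_j) - \vp(f^j(z)) \right| \le \eps_0 \fa n \ge 0.
\end{equation}

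Next I would feed the two ergodicity statements into \rf{eq.proof_shadow_bound}. By hypothesis $\mu$ is ergodic, so by \lemmaref{Lm1} there is $n_1$ with $|\int \vp \, dS_n^f(z) - \int \vp \, d\mu| \le \eps_0$ for all $n \ge n_1$ --- here one must be mildly careful that the shadowing point $z$ lands in the full-measure set of $\mu$-generic points, which is handled below. Likewise, since $\mu_\eps$ is the unique stationary measure it is ergodic, so by \lemmaref{Lm2} there is $n_2$ with $|\int \vp \, dS_n(\bold{x}) - \int \vp \, d\mu_\eps| \le \eps_0$ for $\mu_\eps$-a.e.\ starting point and all $n \ge n_2$. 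Taking $n \ge \max\{n_1, n_2\}$ and chaining the three inequalities through \rf{eq.proof_shadow_bound} yields
\begin{displaymath}
\left| \int \vp \, d\mu_\eps - \int \vp \, d\mu \right| \le 3\eps_0,
\end{displaymath}
valid for every $\eps \le \delta$. Since $\eps_0$ and $\vp \in \cB$ were arbitrary, letting $\eps \to 0$ shows $\int \vp \, d\mu_\eps \to \int \vp \, d\mu$ for all continuous $\vp$, which is exactly weak$^\star$ convergence $\mu_\eps \to \mu$, i.e.\ stochastic stability.

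The step I expect to be the main obstacle is the tension between the \emph{almost-every-point} nature of the ergodic averages and the \emph{specific} points produced along the way. Concretely: \lemmaref{Lm2} gives convergence of $S_n(\bold{x})$ for $\mu_\eps$-a.e.\ $x_0$ and a.e.\ realisation, so one fixes such a good $(x_0, \bold{x})$; then shadowing produces a point $z = z(\bold{x})$, but \lemmaref{Lm1} only controls $S_n^f(z)$ when $z$ lies in the $\mu$-generic set $\mathscr{B}$. There is no reason an arbitrary shadowing point is $\mu$-generic. The fix is to run the argument at the level of measures and a limit point: pass to a subsequence along which $S_{n_k}(\bold{x}) \to \mu_\eps$ and, simultaneously (using compactness of the space of probability measures on the compact $M$), along which $S_{n_k}^f(z) \to \lambda$ for some $f$-invariant probability $\lambda$; inequality \rf{eq.proof_shadow_bound} then forces $|\int \vp d\mu_\eps - \int \vp d\lambda| \le \eps_0$. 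One still needs $\lambda$ close to $\mu$; this is where uniqueness of $\mu$ as an \emph{absolutely continuous} invariant measure, combined with the absolute continuity of $P_\eps(\cdot|x)$ (hence of $\mu_\eps$), must be invoked --- essentially one argues any weak$^\star$ limit of the $\mu_\eps$ as $\eps \to 0$ is $f$-invariant and absolutely continuous, hence equals $\mu$. Making this last absolute-continuity passage rigorous, rather than the elementary shadowing estimate, is the delicate point, and I would structure the writeup so that the shadowing estimate \rf{eq.proof_shadow_bound} does the combinatorial work while a separate short argument pins down the limit via the uniqueness hypothesis.
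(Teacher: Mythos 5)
Your proposal follows essentially the same route as the paper: realisations of the Markov chain with noise level $\eps\le\delta$ are surely $\delta$-pseudo-trajectories, shadowing produces a genuine orbit whose Birkhoff averages are compared with those of the random orbit, and the conclusion comes from a three-term triangle inequality fed by Lemma~\ref{Lm1} and Lemma~\ref{Lm2}. One local point in your favour: you control $|\vp(x_j)-\vp(f^j(z))|$ by uniform continuity of $\vp$, choosing the shadowing accuracy $\eta$ \emph{after} fixing the observable. The paper's Step~3 instead uses the bound $\sup_{z}|\vp(f^j(z))-\vp(x_j)|\le \sup_{z}|\vp(z)|\,d(f^j(z),x_j)$, which is false for a general continuous (even general Lipschitz) $\vp$; your version of this estimate is the correct one.

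The obstacle you flag is genuine, and it is the real issue with this argument. The paper's Step~2 does not close it: it proves $\mu(Z_\mu)=1$, i.e.\ that the set of $\mu$-generic points has full measure, but full $\mu$-measure does not place the \emph{particular} point $z=z(\bold{x})$ produced by the shadowing property inside $Z_\mu$ --- nothing in the definition of shadowing lets you select the shadowing point to be generic, and a single random orbit could perfectly well be shadowed only by, say, points asymptotic to a periodic orbit. Your proposed repair does not close it either: the subsequential weak$^\star$ limit $\lambda$ of $S_{n_k}^f(z)$ is indeed $f$-invariant, but it is an empirical limit along one orbit and there is no mechanism forcing it to be absolutely continuous; systems with shadowing (hyperbolic ones, for instance) typically carry uncountably many \emph{singular} ergodic invariant measures, and the hypothesis only asserts uniqueness within the absolutely continuous ones, so it cannot identify $\lambda$ with $\mu$. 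Closing the gap would require an additional ingredient --- e.g.\ showing that a positive-$\mu$-measure (or positive-Lebesgue-measure) set of points shadows $\mu_\eps$-typical random orbits, or some transfer of absolute continuity from $\mu_\eps$ to the distribution of shadowing points --- and neither your writeup nor the paper supplies it. In short: your proof is the paper's proof, with Step~3 done correctly and with the central unresolved difficulty honestly identified rather than obscured.
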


\begin{proof}%[Proof of Theorem~\ref{thm.stochastic}]
\textbf{Step 1}:
Given any small $\varepsilon > 0$, note that the Markov chain defines,
for each $x_{0}\in M$, and for each $\delta(\varepsilon)>0$, a $\delta(\varepsilon)$-pseudo-trajectory of $f$. That is 
$$d(f(x_j), x_{j+1})\leq \delta(\varepsilon)\text{ for all } j\geq 0,$$ 
where each $x_{j+1}$ has probability distribution $P_{\varepsilon}( \cdot | x_j)$. We will denote by $B_{\delta, \varepsilon}$ a set of realisations of the Markov chain. By hypothesis, $f$ has the shadowing property, which means that, for every $\varepsilon > 0$, we can find a $\delta(\varepsilon)>0$, such that, for any $\delta(\varepsilon)$-pseudo-trajectory, $\bold{x}\in B_{\delta, \varepsilon}$, of $f$, there exist $z\in M$, such that, 
\begin{equation}\label{eq.1Thm}
d(f^j(z),x_j)<\varepsilon , \text{ for all } j\geq 0.
\end{equation}
Furthermore, since $M$ is compact, by Lemma~\ref{lem.shwpts}, every point in $M$ is shadowable. So, every $x \in M$ can be taken as $x = x_{0}$ for a random orbit $(x_{j})_{j}$, such that, there exist $z(x_{0}) \in M$ so that $d(f^j(z),x_j)<\varepsilon , \text{ for all } j\geq 0$.\\

\textbf{Step 2}:  We want to assure that the points $z$ can be taken $\mu$-typical. By hypothesis, $f$ has the shadowing property then, by Lemma~\ref{lem.shwpts}, every point in $M$ is shadowable. 
  Take any $x\in M$, such an arbitrary shadowable point, and small enough $\varepsilon > 0$, then there exists a $\delta(\varepsilon)>0$, such that, for each $\delta(\varepsilon)$-pseudo-trajectory, $\bold{x}\in B_{\delta, \varepsilon}$, of $f$ with $x_0=x$, there exist $z\in M$ such that,
 \begin{equation}\label{eq.1Thm}
d(f^j(z),x_j)<\varepsilon , \text{ for all } j\geq 0.
\end{equation}

Moreover, $\mu$ is ergodic if for $\mu$-a.e point $z\in M$, we have $S_n^f(z)\to \mu$, as $n\to\infty$. For such measure, we consider the following set of points
\begin{equation}
Z_{\mu}:=\left\{z\in M:\frac{1}{n+1}\sum^n_{j=0}{\varphi(f^j(z))}\underrightarrow{n \to \infty} \int{\varphi}d\mu\right\},
\end{equation}
defined for every continuous function $\varphi \in C(M)$. As the set $C(M)$ is separable, we can take a dense sequence $\left( \phi_{k}\right)_{k}$ in $C(M)$. By the Birkhoff Ergodic Theorem, for each $\phi_{k}$, there is a set $M_{k}$ with $\mu\left( M_{k} \right) = 1$, such that, for $z \in M_{k}$, it holds
\begin{displaymath}
\lim_{n \to \infty}\frac{1}{n+1}\sum^n_{j=0}{\phi_{k}(f^j(z))} = \int{\phi}_{k}d\mu.
\end{displaymath}
Defining $X = \bigcap_{k} M_{k}$ we have $\mu\left(X\right) = \mu \left( \bigcap_{k} M_{k} \right) = 1$, since the countable intersection of full measure sets is a full measure set. Furthermore, on $X$ we have for every $\phi_{k}$
\begin{displaymath}
\lim_{n \to \infty}\frac{1}{n+1}\sum^n_{j=0}{\phi_{k}(f^j(z))} = \int{\phi}_{k}d\mu.
\end{displaymath}
Let $\varphi \in C(M)$, then there is a subsequence $\left( \psi_{k} \right)_{k}$ of $\left( \phi_{k}\right)_{k}$ such that for every $\varepsilon >0$,  there is a $K \in \N$ so that for every $k > K$ we have $\lVert \varphi - \psi_{k} \rVert_{\infty} \leq \varepsilon$. Thus, for every $z \in X$ we have

\begin{eqnarray*}
    \left| \frac{1}{n+1}\sum_{j=0}^{n}\varphi\left(f^{j}(z)\right) \right. & - & \left.\int \varphi d\mu \right| \\ 
    &=&  \left| \frac{1}{n+1}\sum_{j=0}^{n}\varphi\left(f^{j}(z)\right) - \frac{1}{n+1}\sum_{j=0}^{n}\psi_{k}\left(f^{j}(z)\right)\right. \\ 
    &&   \left.+ \frac{1}{n+1}\sum_{j=0}^{n}\psi_{k}\left(f^{j}(z)\right) - 
\int \psi_{k}d\mu + \int (\psi_{k}-\varphi)d\mu \right|.
\end{eqnarray*}
Note that,
\begin{displaymath}
\left|\int (\psi_{k}-\varphi)d\mu \right| \leq \varepsilon \mu\left(M\right),
\end{displaymath}
\begin{displaymath}
 \left| \frac{1}{n+1}\sum_{j=0}^{n}\varphi\left(f^{j}(z)\right) - \frac{1}{n+1}\sum_{j=0}^{n}\psi_{k}\left(f^{j}(z)\right)\right|\leq \varepsilon,
\end{displaymath}
and 
\begin{displaymath}
\lim_{n \to \infty} \frac{1}{n+1}\sum_{j=0}^{n}\psi_{k}\left(f^{j}(z)\right) - 
\int \psi_{k}d\mu = 0.
\end{displaymath}
Then, by the triangle inequality we obtain,
\begin{displaymath}
 \limsup_{n} \left| \frac{1}{n+1}\sum_{j=0}^{n}\varphi\left(f^{j}(z)\right) - \int \varphi d\mu \right| \leq \varepsilon (1 + \mu\left(M\right) ),
\end{displaymath}
which is valid for every $\varepsilon$. Thus,
\begin{displaymath}
 \limsup_{n} \left| \frac{1}{n+1}\sum_{j=0}^{n}\varphi\left(f^{j}(z)\right) - \int \varphi d\mu \right| = 0.
\end{displaymath}
And hence,
\begin{displaymath}
 \lim_{n} \left| \frac{1}{n+1}\sum_{j=0}^{n}\varphi\left(f^{j}(z)\right) - \int \varphi d\mu \right| = 0.
\end{displaymath}
Therefore, we conclude that $\mu(Z_{\mu})=1$.

Note also that, since $\mu$ is absolutely continuous with respect to the Lebesgue measure $m$, having $\mu(Z_{\mu})=1$ implies $m(Z_{\mu}) > 0$, so the measure $\mu$ is physical. We do not require each random trajectory to be shadowed by a unique point $z$.
\\

%\vspace{1cm}

\textbf{Step 3}:
Considering a given random trajectory, we can prove that for every fixed $\varepsilon >0$, an observable taken in the set of bounded continuous function $\varphi \in \mathcal{B}$, and for any sufficiently large $N\in \mathbb{N}$, there exists a constant $C(\varphi)$, such that
\begin{equation}\label{eq.thm}
\left|\int{\varphi dS_n^f(z)}-\int{\varphi dS_n(\bold{x})}\right|\leq C(\varphi)\varepsilon, \text{ for all }    n>N,
\end{equation}
holds.
Indeed,
%\begin{displaymath}
%\begin{split}
\begin{align}\label{eq.ineq}
&\left| \int{\varphi dS_n^f(z)} - \int{\varphi dS_{n}(\bold{x})}\right| \nonumber\\ 
&\qquad \qquad =\left|\int{\varphi d\left(\frac{1}{n+1}\sum^n_{j=0}{\delta_{f^j(z)}}\right)}-\int{\varphi d\left(\frac{1}{n+1}\sum^n_{j=0}{\delta_{x_j}}\right)}\right|\nonumber \\
& \qquad \qquad \leq \frac{1}{n+1}\sum^n_{j=0}\left|\int{\varphi d\delta_{f^j(z)}}-\int{\varphi d\delta_{x_j}}\right|\nonumber  \\
& \qquad \qquad = \frac{1}{n+1}\sum^n_{j=0}\left|\varphi(f^j(z))-\varphi(x_j)\right|\nonumber \\
& \qquad \qquad \leq  \frac{1}{n+1}\sum^n_{j=0}\sup_{z\in M}\left|\varphi(f^j(z))-\varphi(x_j)\right|.  
%\leq & \sup_{x\in M}|\varphi(x)|d(f^k(x), x_k)\leq  C(\varphi)\varepsilon. \nonumber 
\end{align}

Since $\varphi \in \mathcal{B}$, there exists a constant $C(\varphi)>0$, such that, 
\begin{displaymath}
\displaystyle{\sup_{z\in M}|\varphi(z)|}\leq C(\varphi).
\end{displaymath}
 Then, we can write from the equation (\ref{eq.ineq}) for every $j\geq 0$,
\begin{displaymath} 
 \displaystyle{\sup_{z\in M}\left|\varphi(f^j(z))-\varphi(x_j)\right|}\leq \sup_{z\in M}|\varphi(z)|d(f^j(z), x_j).
\end{displaymath} 
  Thus, we have 
\begin{displaymath}
\left|\int{\varphi dS_n^f(z)}-\int{\varphi dS_n(\bold{x})}\right|\leq C(\varphi)\varepsilon \text{ for all } n\geq N.
\end{displaymath}\\

\textbf{Step 4}:
We finally can show stochastic stability. Note that $|\int{\varphi d\mu}-\int{\varphi d\mu_{\varepsilon}}|$ can be rewritten in therms of $\int{\varphi dS_n^f(z)}$ and $\int{\varphi dS_n(\bold{x})}$ and, then applying triangle inequality, we have the following 
\begin{displaymath}
\begin{split}
\left|\int{\varphi d\mu}-\int{\varphi d\mu_{\varepsilon}}\right|
&\leq  \left|\int{\varphi d\mu}-\int{\varphi dS_n^f(z)}\right|+\left|\int{\varphi dS_n(\bold{x})}-\int{\varphi d\mu_{\varepsilon}}\right| \\ 
&+ \left|\int{\varphi dS_n^f(z)}-\int{\varphi dS_n(\bold{x})}\right| \\ 
&\leq  (C(\varphi)+1)\varepsilon+\delta(\varepsilon).
\end{split}
\end{displaymath}
Observe that the first term from the right of the inequality follows from Lemma~\ref{Lm1}, which by step 2, can be applied to almost all orbits of $f$ with $z$ chosen $\mu$ almost surely. The second term  follows from Lemma~\ref{Lm2}, which can be applied to almost all random trajectories. The last term is a consequence from (\ref{eq.thm}). Moreover, since $B_{\delta,\varepsilon}$ is a set of realisations of Markov chain and the measure $S_n(\bold{x})$, is defined as (\ref{eq.sum_seq}), then from (\ref{eq.converge_random}) and ergodicity, we have that in the limit 
  $\mu_{\varepsilon}(B_{\delta,\varepsilon})\to 1$, as $N\to \infty$.
This concludes the proof.
\end{proof}

\begin{remark}
Uniqueness of stationary measures holds assuming some transitivity on the dynamics, for example. The context in~\cite[Proposition 2.1]{Ara00} is what we have in mind. In the cases where $f$ has more than one physical measure, a more general notion of stochastic stability needs to be applied. The natural way to go is to consider that the simplex of stationary measures are weak* close to the simplex generated by the physical measures of the unperturbed system when $\varepsilon$ is small, in the lines of~\cite{Ara00}.
\end{remark}

\begin{remark}
The quantifier $\delta(\varepsilon)$ should be considered as a continuous function controlling the type of shadowing, $\delta: \mathbb{R}^+\to \mathbb{R}^+$, such that, $\delta(\varepsilon)\to 0$ as $\varepsilon \to 0$. It is the same appearing in (\ref{eq.pseudo_orbit}), but we introduce an explicit dependence on the size of the random perturbation. Therefore, the approach taken here allows for different versions of shadowing property by choosing different functions  $\delta(\varepsilon)$. This includes, for example, the Lipschitz shadowing~\cite{PiT10} as well as some other types of shadowing properties.
\end{remark}

It is possible to formulate a similar result to Theorem~\ref{thm.stochastic} in terms of random maps, given that there is an equivalent formulation of the perturbation scheme through iteration of random functions via the representation of Markov chains~\cite{JKR15, JMPR19}.

%.............
\section*{Acknowledgements}

This study was financed in part by the Coordena\c{c}\~ao de Aperfei\c{c}oamento de Pessoal de N\'ivel Superior - Brasil (CAPES) - Finance Code 001.

	\noindent C. S. R. would like to acknowledge support from the Max Planck Society, Germany, through the award of a Max Planck Partner Group for Geometry and Probability in Dynamical Systems. C. S. R. has also been supported by the Brazilian agency: grant \#2016/00332-1, grant \#2018/13481-0, grant \#2020/04426-6, S\~{a}o Paulo Research Foundation (FAPESP).
The opinions, hypotheses and conclusions or recommendations expressed in this work are the responsibility of the authors and do not necessarily reflect the views of FAPESP. 
%%%%%%%%%%%%%%%%%%%%%%%%%%%%%%%%%%%%%%%%%%%%%
\vspace{1cm}

%%%%%%%%%%%%%%%%%%%%%%%%%%%%%%%%%%%%%%%%%%%%%

\bibliographystyle{amsalpha}

\begin{thebibliography}{amsalpha}
    
    
\bibitem[AAV07]{AAV07} J. F. Alves, V. Ara\'{u}jo, and C. H. V\'{a}squez,
  {\it Stochastic stability of non-uniformly hyperbolic
    diffeomorphisms}, Stoch. Dyn. {\bf 7} (2007), 299-333.

\bibitem[AlA03]{AlA03} J. F. Alves and V. Ara\'{u}jo, {\it Random
    perturbations of nonuniformly expanding maps}, Ast\'{e}risque {\bf
    286}, (2003), 25-62.


\bibitem[Ara00]{Ara00} V. Ara\'{u}jo, {\it Attractors and time averages for
    random maps}, Ann. de l'Inst. H. Poincar\'{e} - Anal. Non
  lin\'{e}aire {\bf 17} (2000), 307-369.

\bibitem[Ara01]{Ara01} V. Ara\'{u}jo, {\it Infinitely Many Stochastically
    Stable Attractors}, Nonlinearity {\bf 14} (2001), 583-596.
    
%\bibitem[B17]{B17} M. Blank, {\it Ergodic averaging with and without invariant measures}, Nonlinearity {\bf 30} (2017), 4649-4664.

\bibitem[BeV06]{BeV06}
M. Benedicks and M. Viana, \emph{Random perturbations and statistical properties of H\'{e}non-like maps.}, %Annales de l'Institut Henri Poincar\'{e} Ð Analyse
Non Lin\'{e}aire \textbf{23} (2006), no.~5, 713--752.

\bibitem[BDV05]{BDV05}
C. Bonatti, L. J. D\'{i}az, and M. Viana, \emph{Dynamics beyond uniform hyperbolicity: a global geometric and probabilistic perspective}, Encyclopedia of {M}athematical {S}ciences, vol 102, Springer, Berlin, 2005.

\bibitem[Bow75]{Bow75} R. Bowen, {\it Equilibrium states and the ergodic theory of
  Anosov diffeomorphisms}, Lect. Notes in Math. Vol 470, Springer-Verlag (1975).
  
%\bibitem[EF17]{EF17} E. Ferraz, {\it Uniform Limits and Pointwise Dynamics}, Federal do Rio de Janeiro UFRJ, 2017
  
  \bibitem[JKR15]{JKR15} J. Jost, M. Kell, and C. S. Rodrigues,
  \emph{Representation of Markov chains by random maps: existence and
    regularity conditions}, Calculus of Variations and Partial
  Differential Equations \textbf{54} (2015), 2637-2655.
  
  \bibitem[JMPR19]{JMPR19} J. Jost, R. Rostislav, J. W. Portegies, and C. S. Rodrigues,
  \emph{On the regular representation of measures}, Communications in Analysis and Geometry \textbf{27} (2019) 1799-1823.

\bibitem[Kat80]{Kat80} A. Katok, {\it Lyapunov exponents, enrtopy and
    periodic points of diffeomorphisms}, Publ. Math. IHES, {\bf 51}
  (1980), 137-173.

\bibitem[Kif86]{Kif86} Yu. Kifer, \emph{ Ergodic theory of random 
transformations}, Progress in Probability and Statistics, vol 10,  Birkh\"{a}user, Boston, 1986.

\bibitem[Kif86b]{Kif86b} Yu. Kifer, {\it General random perturbations of
    hyperbilic and expanding transformations},
  Jour. D'Anal. Math\'{e}matiche {\bf 47} (1986), 112-150.

\bibitem[Kif88]{Kif88} Yu. Kifer, \emph{Random perturbations of dynamical systems}, Progress in Probability and Statistics, vol 16, Birkh\"{a}user, Boston,  1988.

\bibitem[Mo16]{Mo16} C.A. Morales, {\it Shadowable points}, Dynamical Systems, {\bf 31}
  (2016), 347-356.

\bibitem[Pil99]{Pil99} Yu. S. Pilyugin, {\it Shadowing in Dynamical Systems},
  Lect. Notes in Math. Vol 1706, Springer-Verlag (1999).

\bibitem[PiT10]{PiT10} Yu. S. Pilyugin and S. Tikhomirov, {\it Lipschitz
    shadowing implies structural stability}, Nonlinearity {\bf 23}
  (2010), 2509-2515.
  
%\bibitem[SS20]{SS20} Kazuhiro Sakai and Naoya Sumi {\it The measure of shadowable pseudo-orbits}, Dynamical Systems {\bf 35}
  %(2020), 369-381.  
  
\bibitem[Re17]{Re17}
Elias Ferraz Rego, {\it  Uniform Limits and Pointwise Dynamics}, Master thesis UFRJ/ IM/ Programa de P\'{o}s-gradua\c{c}\~{a}o do Instituto de Matem\'{a}tica, 20015. (2017).


\bibitem[Tod14]{Tod14}
Dmitry Todorov, Stochastic shadowing and stochastic stability, arXiv:1411.7604v1 [math.DS], (2014).

\bibitem[Via97]{Via97} M. Viana, {\it Stochastic dynamics of deterministic
    systems}, 21st - Braz. Math. Colloq., IMPA - Rio de Janeiro
  (1997).


\bibitem[You86]{You86} L.-S. Young, {\it Stochastic stability of hyperbolic
    attractors}, Erg. Th. \& Dyn. Sys. {\bf 6} (1986), 311-319.



\end{thebibliography}

\end{document}